 \newtheorem{theorem}{Theorem}
 \newtheorem{lemma}[theorem]{Lemma}
  \newtheorem{proposition}[theorem]{Proposition}
 \newtheorem{corollary}[theorem]{Corollary}
\newcommand{\mbb}{\mathbb}
\newcommand{\N}{\mathbb N}
\newcommand{\Z}{\mathbb Z}
\newcommand{\R}{\mathbb R}
\newcommand{\C}{\mathbb C}
\let\ol=\overline
\newcommand{\T}{\overline{T}}
\newcommand{\ms}{\mathfrak{S}}
\newcommand{\Id}{\rm Id}
\newcommand{\D}{\mathcal D}
\theoremstyle{remark}
\newtheorem{remark}[theorem]{Remark}
\theoremstyle{definition}
\title{Dynamics of Certain Distal Actions on Spheres}
\author{Riddhi Shah}
\author{Alok Kumar Yadav}
\affil{Jawaharlal Nehru University, New Delhi, India}
\begin{document}

\maketitle
\begin{abstract}
Consider the action of $SL(n+1,\R)$ on $\mbb{S}^n$ arising as the quotient of the linear action on $\R^{n+1}\setminus\{0\}$. We show that for a semigroup $\mathfrak{S}$ of $SL(n+1,\R)$, the following are equivalent:  $(1)$ $\mathfrak{S}$ acts distally on the unit sphere 
$\mbb{S}^n$. $(2)$ the closure of $\mathfrak{S}$ is a compact group. We also show that if $\mathfrak{S}$ is closed, the above conditions are equivalent to the condition that 
every cyclic subsemigroup of $\mathfrak{S}$ acts distally on $\mbb{S}^n$. On the unit circle $\mbb{S}^1$,  we consider the `affine' actions corresponding to maps in  
$GL(2,\R)$ and discuss the conditions for the existence of fixed points and periodic points, which in turn imply that these maps are not distal. \end{abstract}

\noindent{\bf Keywords:} dynamical system; semigroup; distal action; affine map; fixed point.
\smallskip

\noindent {\bf 2010 Mathematical Subject Classification:} 54H20; 37B05; 43A60.


\section{Introduction}
Let $X$ be a (Hausdorff) topological space. A semigroup $\mathfrak{S}$ of homeomorphisms of $X$ is said to act {\it distally} on $X$ if for any pair of distinct elements $x, y\in X$, 
the closure of $\{(T(x),T(y))\mid T\in\mathfrak{S}\}$ does not intersect the diagonal $\{(d,d)\mid d\in X \}$; (equivalently we say that the $\ms$-action on $X$ is distal). 
Let $T:{X}\to{X}$ be a homeomorphism. The map $T$ is said to be {\it distal} if the group 
$\{T^n\}_{n\in\Z}$ acts distally on $X$. If $X$ is compact, then $T$ is distal if and only if the semigroup $\{T^n\}_{n\in\N}$ acts distally (cf.\ Berglund et al.\ \cite{BJM}). 

The notion of distality was introduced by Hilbert (cf.\ Ellis \cite{E4}, Moore \cite{M9}) and studied by many in different contexts (see Abels \cite{A1,A2}, 
Furstenberg \cite{F6}, Raja-Shah \cite{RaSh10,RaSh11} and Shah \cite{Sh12}, and references cited therein). Note that a homeomorphism $T$ of a 
topological space is distal if and only if $T^n$ is so, for any $n\in\N$.  

For $x\in \R^n$, let $\|x\|$ denote the usual norm, which is the Euclidean distance between $x$ and the origin. For $T\in GL(n+1,\R)$ and $x\in\R^{n+1}\setminus\{0\}$, let 
$\overline{T}:\mbb{S}^n \to \mbb{S}^n$ be defined as $\overline{T}(x)= T(x)/ \|T(x)\|$, $x\in \mbb{S}^n$. 
Let $\D=\{r\Id\in GL(n+1,\R)\mid r\in\R\setminus\{0\}\}$. Then $\D$ is the centre of $GL(n+1,\R)$. 

In Section 2, we shall prove that for a semigroup $\mathfrak{S}$ of $GL(n+1,\R)$, distality of the action of $\mathfrak{S}$ on $\mbb{S}^n$ and the condition that the closure of 
$\mathfrak{S}\D/\D$ in $GL(n+1,\R)/\D$ is a compact group are equivalent (see Theorem~\ref{a}). We also show that if $\mathfrak{S}$ is a closed semigroup of $SL(n+1,\R)$ 
(consisting of matrices of determinant 1), 
then the above is equivalent to the condition that every cyclic subsemigroup of $\mathfrak{S}$ acts distally on $\mbb{S}^n$ (see Corollary~\ref{c}). In Section 3, for $T$ as above 
 and $a\in\R^{n+1}\setminus\{0\}$ with $\|T^{-1}(a)\|<1$, we discuss the `affine' action of $\overline{T}_a$ on $\mbb{S}^n$ defined by 
 $\displaystyle{\overline{T}_a(x)=(a+T(x))/ \|a+T(x)\|}$, $x\in\mbb{S}^n$. Here, $\overline{T}_a$ is well defined and a homeomorphism (see Lemma~\ref{e}). For $T\in GL(2,\R)$, 
 we study distality of `affine' maps on $\mbb{S}^1$. In particular, we show that if $T$ has at least one positive real eigenvalue then 
$\overline{T}_a$ has a fixed point and hence it is not distal. If $T$ has complex eigenvalues belonging to a specific region in $\mbb{S}^1$, then there exists $a$ such that 
$\overline{T}_a$ on $\mbb{S}^1$ has fixed points. We also explore some conditions to ensure the existence of at least one nonzero $a$ such that $\T_a$ on $\mbb{S}^n$ is not 
distal.

For a $T\in GL(n,\R)$, let $C(T)=\{v\in \R^n\mid T^m(v)\to 0\mbox{ as }m\to\infty\}$. Note that 
$T$ is distal on $\R^n$ if and only if $C(T)$ and $C(T^{-1})$ are trivial. We will use the notion of contraction groups below.


\section{Dynamics of the semigroup action on $\mbb{S}^n$}
In this section we consider a semigroup of $GL(n+1,\R)$ and study the dynamical properties of its canonical actions on $\mbb{S}^n$. 
Note that if $T\in GL(n+1, \R)$ is distal, it does not imply that $\overline{T}$ is distal. For example, if we consider $T=(a_{ij})$, a $2\times 2$ matrix with 
entries $a_{11}=1,a_{12}=1, a_{21}=0$ and $a_{22}=1$, then it is easy to check that $T$ is distal on $\R^2$ but $\overline{T}$ on $\mbb{S}^1$ is not distal. 
The following theorem characterises distal actions of semigroups on $\mbb{S}^n$. Here, $\mathfrak{S}\subset GL(n+1,\R)$ acts on $\mbb{S}^n$ through the 
continuous (group) action of $GL(n+1,\R)$ on $\mbb{S}^n$ which is defined as follows, for $T\in GL(n+1,\R)$. 
\begin{center}
$GL(n+1,\R)\times\mbb{S}^n\to\mbb{S}^n$, $(T, x)\mapsto \overline{T}(x)$, where $x\in\mbb{S}^n$,
\end{center}

\begin{theorem}\label{a}
Let $\mathfrak{S}\subset GL(n+1,\R)$ be a semigroup. Then the following are equivalent:
\begin{enumerate}
\item[$(a)$] $\mathfrak{S}$ acts distally on $\mbb{S}^n$.
\item[$(b)$] The closure of $\ms\D/\D$ in $GL(n+1,\R)/\D$ is a compact group, where $\D$ is the centre of $GL(n+1,\R)$.
\item[($c)$] For the semigroup $\mathfrak{S}^\prime=\{\alpha_TT\mid T\in GL(n+1,\R) \mbox{ and }\alpha_T=|\det T|^{-1/n}\}$, the closure of $\mathfrak{S}^\prime$ is a 
compact group.
\end{enumerate}
\end{theorem}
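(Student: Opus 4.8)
The natural strategy is to prove the cycle of implications $(b)\Rightarrow(c)\Rightarrow(a)\Rightarrow(b)$, leveraging the fact that passing to $\mathfrak{S}'$ is just a concrete normalization of the quotient by $\D$. First I would observe that the map $GL(n+1,\R)\to GL(n+1,\R)/\D$ restricted to $SL^{\pm}(n+1,\R)$ (matrices of determinant $\pm 1$) and then to its index-two subgroup $\{T: \det T>0,\ \text{suitably scaled}\}$ identifies $\mathfrak{S}'$ with a set of coset representatives: each coset $T\D$ meets the set $\{S: |\det S|=1\}$, and for the connected piece $\det S = 1$ the representative is unique up to the finite group $\{rI : r^{n+1}=1, r>0\}=\{I\}$ when $n+1$ is such that... — more carefully, $\alpha_T T$ has $|\det(\alpha_T T)| = |\det T|^{-1}\cdot|\det T| = 1$ since $\alpha_T^{n+1} = |\det T|^{-(n+1)/n}$; wait, one must check $\alpha_T^{n+1}|\det T|=1$, i.e. $|\det T|^{-(n+1)/n}|\det T| = |\det T|^{-1/n}\ne 1$ in general, so in fact $|\det(\alpha_T T)| = |\det T|^{-1/n}$ is not $1$ — so I would instead note that $\alpha_T T$ and $\T$ induce the \emph{same} map on $\mbb S^n$ (scalars act trivially), hence $\mathfrak{S}'$ acts on $\mbb S^n$ exactly as $\mathfrak{S}$ does, and $\mathfrak{S}'\D/\D = \mathfrak{S}\D/\D$. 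The real content is then that $\mathfrak{S}'$ is, up to the kernel $\D\cap SL^{\pm}$ which is finite, a faithful lift of $\mathfrak{S}\D/\D$, so its closure is compact iff the closure of $\mathfrak{S}\D/\D$ is.

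For $(b)\Leftrightarrow(c)$ the key step is: the restriction of the quotient map $q:GL(n+1,\R)\to GL(n+1,\R)/\D$ to the closed subgroup $H=\{S\in GL(n+1,\R): \det S = \pm|\det S|,\ \alpha_S S = S\}$... again this is circular; cleaner is to work with $G_1 = SL(n+1,\R)$ together with $\pm$-scalings. I would argue that $q$ maps the closed subgroup $G_0 = \{S : |\det S|^{1/n} = 1\text{-normalized}\}$—concretely the image of $T\mapsto \alpha_T T$—homeomorphically onto an open subgroup of $GL(n+1,\R)/\D$ modulo a compact (finite) kernel $F=\D\cap G_0$. Since $\mathfrak{S}' = \{\alpha_T T : T\in\mathfrak{S}\}\subseteq G_0$ and $q(\mathfrak{S}') = \mathfrak{S}\D/\D$, a proper closed subgroup with compact image has compact preimage of that image; hence $\overline{\mathfrak{S}'}$ compact $\iff \overline{q(\mathfrak{S}')}=\overline{\mathfrak{S}\D/\D}$ compact. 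One subtlety to dispatch: $\mathfrak{S}'$ need not be a semigroup a priori unless $\alpha_{ST}=\alpha_S\alpha_T$, which holds because $\det$ is multiplicative and $\alpha_T = |\det T|^{-1/n}$, so $\alpha_{ST}(ST) = (\alpha_S S)(\alpha_T T)$ — this closure under multiplication should be stated explicitly.

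The implication $(b)\Rightarrow(a)$ should be the easy direction: if $K=\overline{\mathfrak{S}\D/\D}$ is a compact group, then $K$ acts continuously on $\mbb S^n$ (the $\D$-action being trivial, the $GL/\D$-action on $\mbb S^n$ is well-defined and continuous), and any compact group of homeomorphisms of a compact metric space acts distally — one averages a metric over $K$ with Haar measure to get a $K$-invariant metric $d'$, and then $d'(T x, Ty) = d'(x,y)>0$ for all $T\in K\supseteq$ image of $\mathfrak{S}$, which is exactly distality. The genuinely substantive direction is $(a)\Rightarrow(b)$ (equivalently $(a)\Rightarrow(c)$): \textbf{this is the main obstacle}. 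Here I would use the contraction-group machinery flagged in the introduction. Suppose $\overline{\mathfrak{S}'}$ is not compact; I want to produce two distinct points of $\mbb S^n$ that are proximal under $\mathfrak{S}$. Non-compactness of a closed sub-semigroup of $SL^{\pm}$-type forces either (i) some element $S=\alpha_T T$ with an eigenvalue of modulus $\ne 1$, whence $C(S)$ or $C(S^{-1})$ is nontrivial and a short argument shows $\overline{S}$ is not distal on $\mbb S^n$ (contracting a nonzero subspace to a point on the sphere) — this is exactly the phenomenon in the $\left(\begin{smallmatrix}1&1\\0&1\end{smallmatrix}\right)$ example, where the unipotent part matters; or (ii) all elements of $\overline{\mathfrak{S}'}$ have spectrum on the unit circle but the semigroup is still unbounded, which by the Jordan-form analysis can only happen through nontrivial unipotent parts, and a unipotent $u\ne I$ always has $C(u)$ or acts non-distally on $\mbb S^n$ because $u^m(x)/\|u^m(x)\|$ converges to a fixed direction for $x$ outside the fixed subspace. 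I would package (i)–(ii) via: a closed sub-semigroup of $GL(n+1,\R)$ whose every cyclic sub-semigroup acts distally on $\mbb S^n$ consists of elements all of whose normalized powers $\overline{S^m}$ stay in a compact set, forcing $S$ (mod $\D$) to have all eigenvalues of modulus $1$ and to be semisimple mod $\D$, i.e. conjugate into $\mathrm{O}(n+1)$ after normalization; then a Burnside/boundedness argument (the semigroup generated is bounded since it lies in a set of matrices with bounded entries after the $\D$-normalization, using that distality of the full semigroup upgrades pointwise boundedness to uniform boundedness via the closure being a group — here one invokes the Ellis joint-continuity / enveloping semigroup theorem: a distal flow on a compact space has an enveloping semigroup that is a group of continuous maps) gives precompactness of $\mathfrak{S}'$, and its closure, being a precompact semigroup that is a group, is a compact group. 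The delicate points are (a) ruling out the unipotent obstruction carefully, and (b) the jump from "each cyclic piece is precompact" to "the whole semigroup is precompact," which is exactly where Corollary~\ref{c}'s hypothesis of closedness and the structure theory enter, and where I expect to spend the most effort.
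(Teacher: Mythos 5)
Your easy implications are fine: $(b)\Rightarrow(a)$ via a Haar-averaged invariant metric, and the bookkeeping relating $(b)$ and $(c)$ through the quotient by $\D$ (your observation about the exponent is a fair catch of a typo --- the intended normalization is $\alpha_T=|\det T|^{-1/(n+1)}$, so that $|\det(\alpha_TT)|=1$; with the stated $-1/n$ your computation is correct and the statement should be read with the corrected exponent). The genuine gap is in the substantive direction $(a)\Rightarrow(c)$. Your dichotomy --- non-compactness of $\overline{\mathfrak{S}'}$ forces either a single element with an eigenvalue of modulus $\ne 1$ or a nontrivial unipotent element --- is false. By Bass's example \cite{BH} (cited in the paper precisely for this purpose), there are subgroups of $GL(n,\C)$ with non-compact closure in which \emph{every} element generates a relatively compact group, hence is semisimple with all eigenvalues of modulus one. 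So an unbounded semigroup of determinant-$\pm1$ matrices need not contain any ``bad'' single element, and producing a proximal pair from one element cannot complete the proof. What the single-element analysis can give (and this is how the paper uses it) is only that each cyclic subsemigroup of $\mathfrak{S}'$ is relatively compact; even that step is done in the paper with some care, using Lemma 2.1 of \cite{DaRa3} to extract the subspace $W$ on which a subsequence of powers converges and then invoking distality of the sphere action to force $W=\R^{n+1}$ --- this treats the unit-modulus non-semisimple case and the contracting case uniformly, in the spirit of, but more carefully than, your sketch.

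The remaining jump --- from ``every cyclic subsemigroup of $\mathfrak{S}'$ is relatively compact'' to ``$\overline{\mathfrak{S}'}$ is a compact group'' --- is exactly where your proposal stops being a proof: the ``Burnside/boundedness argument'' via the Ellis enveloping semigroup is a hand-wave at the one point where a real theorem is needed, and you acknowledge as much. The paper closes this with its Proposition 2, which rests on the Fresnel--van der Put theorem (Theorem 1.1 of \cite{FM5}): a \emph{closed} subsemigroup of $GL(n,\C)$, each of whose elements generates a relatively compact cyclic semigroup, is conjugate into the unitary group and hence is a compact group (closedness is essential, again by \cite{BH}; the paper also notes that Abels's structure theory of distal linear groups could substitute). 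One also needs the small reduction, which the paper makes explicitly, that one may replace $\mathfrak{S}'$ by its closure because distality of the sphere action passes to the closure and limit points keep determinant $\pm1$. If you want to avoid quoting \cite{FM5}, a workable alternative is to argue globally rather than element-by-element: from an unbounded sequence $T_k\in\mathfrak{S}'$ use the polar ($KAK$) decomposition, with top singular value tending to $\infty$ and bottom one tending to $0$ (determinant $\pm1$), to produce two distinct points of $\mbb{S}^n$ whose images under $\overline{T}_k$ converge to the same limit, contradicting distality directly; but as written your proposal neither carries this out nor supplies a substitute for Proposition 2, so the main implication is not established.
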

 
We first prove another result which will be required in the proof of the above theorem.

In the following proposition, for $T\in GL(n,\C)$, where $\C$ is the field of complex numbers, the condition that $\overline{\{T^m\}}_{m\in\N}$ is compact, is equivalent to 
the condition that  $\overline{\{T^m\}}_{m\in\N}$ is a compact group (cf.\ \cite{HeMo7}). It is also equivalent to the condition that $T$ is semisimple and its eigenvalues 
are of absolute value one; (this fact is well-known). Therefore the following proposition shows that Theorem 1.1 of \cite{FM5} holds for semigroups. 

\begin{proposition}\label{b}
If $\mathfrak{S}$ is a closed semigroup in $GL(n,\C)$ such that for every element $T\in\mathfrak{S}$, the closure of $\{T^m\}_{n\in\N}$ in $GL(n,\C)$ is compact. Then 
$\mathfrak{S}$ is a compact group.
\end{proposition}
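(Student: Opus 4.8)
**The plan is to show that $\mathfrak{S}$, being a closed semigroup all of whose elements generate relatively compact cyclic semigroups, must already be a group with compact closure — i.e. compact — and then upgrade "compact semigroup" to "compact group."** The first and most important reduction is to observe that for a single $T \in \mathfrak{S}$, the hypothesis that $\overline{\{T^m\}}_{m\in\N}$ is compact forces $T$ to be semisimple with all eigenvalues of absolute value $1$ (the well-known fact quoted right before the proposition). In particular each $T$ has an inverse lying in the closure of $\{T^m\}_{m\in\N}$: indeed, in a compact monoid the closure of $\{T^m\}_{m \in \N}$ is a compact group with identity element some idempotent $e$, but since $T$ is invertible in $GL(n,\C)$ the only idempotent available is $e = \mathrm{Id}$, so $\overline{\{T^m\}}_{m\in\N}$ is a compact \emph{group} containing $T^{-1}$. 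Because $\mathfrak{S}$ is closed, $T^{-1} \in \mathfrak{S}$. Hence $\mathfrak{S}$ is closed under inversion, so $\mathfrak{S}$ is a subgroup of $GL(n,\C)$.

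Having established that $\mathfrak{S}$ is a closed subgroup of $GL(n,\C)$, it remains to prove it is compact. Here I would argue by showing $\mathfrak{S}$ is bounded in $M_n(\C)$; combined with closedness this gives compactness. The natural approach: find an inner product on $\C^n$ preserved by all of $\mathfrak{S}$, which would realize $\mathfrak{S}$ as a subgroup of a conjugate of $U(n)$. A clean way to produce such an inner product is to use amenability, but for a not-necessarily-abelian subgroup one cannot directly average. Instead, I would invoke a fixed-point / boundedness argument on the symmetric space $GL(n,\C)/U(n)$ of positive-definite Hermitian forms: this space is a CAT(0) (Hadamard) space (or a complete simply connected Riemannian manifold of nonpositive curvature), and a group of isometries all of whose cyclic subgroups are bounded (have bounded orbits) — which is exactly our situation, since each $\overline{\{T^m\}}_{m \in \N}$ is compact — need not have a global fixed point in general, so one must be more careful.

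The safe route, and the one I expect the paper takes, is to reduce to the abelian/triangular case via classical structure theory. By Lie–Kolchin or by passing to the Zariski closure, analyze the connected component and the structure of $\mathfrak{S}$: each element is semisimple with unit-modulus eigenvalues, so the Zariski closure $G$ of $\mathfrak{S}$ is a group consisting of semisimple elements, hence $G$ is reductive, and a reductive group all of whose real points are semisimple with eigenvalues on the unit circle is compact (its maximal tori are anisotropic and it has no unipotents, so no noncompact simple factors and no split part). Then $\mathfrak{S} \subseteq G(\C)$ with $G$ compact gives $\overline{\mathfrak{S}}$ compact; since $\mathfrak{S}$ is already closed, $\mathfrak{S}$ is a compact group. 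Alternatively, and perhaps more in keeping with the elementary tone, one can cite Theorem 1.1 of \cite{FM5} for groups (which is exactly this statement for groups) — the whole point of the proposition is to remove the word "group" from that hypothesis, and once we have shown $\mathfrak{S}$ is a group we are done immediately.

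\textbf{The main obstacle} is the step from "$\mathfrak{S}$ is a closed subgroup in which every cyclic subgroup is relatively compact" to "$\mathfrak{S}$ itself is relatively compact." For abelian $\mathfrak{S}$ this is straightforward (simultaneously diagonalize, or use that a closed abelian subgroup of $GL(n,\C)$ with all elements of the required form is contained in a maximal compact torus times a compact group), but in the non-abelian case one genuinely needs either a fixed-point theorem on the space of Hermitian forms together with an argument that the relevant orbit is bounded, or an appeal to algebraic-group structure theory as above. I would handle it by first conjugating $\mathfrak{S}$ (or rather its Zariski closure) into block form, treating the "diagonal blocks" with the compact-torus argument and ruling out off-diagonal (unipotent-type) growth using semisimplicity of every element, thereby forcing $\mathfrak{S}$ into a conjugate of $U(n)$.
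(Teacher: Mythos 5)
Your opening step is exactly the paper's: $H_T=\overline{\{T^m\}}_{m\in\N}$ is a compact subsemigroup of $GL(n,\C)$, hence a group whose identity (an invertible idempotent) must be $\Id$, so $T^{-1}\in H_T\subseteq\mathfrak{S}$ by closedness of $\mathfrak{S}$, and $\mathfrak{S}$ is a group; compactness of $H_T$ also gives that each $T$ is semisimple with all eigenvalues of absolute value one. The paper then finishes in one line by the route you relegate to an ``alternatively'': it invokes Theorem 1.1 of \cite{FM5} (the group case) for the closed group $\mathfrak{S}$, placing it inside a conjugate of $U_n(\C)$, whence it is compact. So the two-step argument ``show $\mathfrak{S}$ is a group, then quote \cite{FM5}'' is the intended proof, and that part of your proposal is complete and correct.

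The branch you actually single out as ``the safe route, and the one I expect the paper takes'' --- pass to the Zariski closure $G$ of $\mathfrak{S}$, claim $G$ consists of semisimple elements with unit-modulus eigenvalues, deduce that $G$ is reductive and anisotropic, hence compact --- has a genuine gap. Semisimplicity together with eigenvalues on the unit circle is not a Zariski-closed (indeed not even a topologically closed) condition, so the Zariski closure need not inherit it; more tellingly, that argument nowhere uses that $\mathfrak{S}$ is closed in the ordinary topology, and this hypothesis is indispensable: the remark following the proposition cites Bass \cite{BH} for a subgroup of $GL(n,\C)$ with non-compact closure all of whose elements generate relatively compact groups. Such a group satisfies every input of your Zariski-closure argument, so no argument of that shape can succeed without injecting closedness --- which is precisely the role it plays in the paper's proof, both in making $\mathfrak{S}$ a group and in the final appeal to \cite{FM5}. (Your CAT(0) remark correctly anticipates the analogous failure there.) In short: keep the two steps that match the paper and drop the structure-theory detour.
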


\begin{proof} Let $H_T=\overline{\{T^m\}}_{m\in\N}$. Since $H_T$ is compact, it is a group (see Sec.\ 1 in Ch.\ $A$ of \cite{HeMo7}). Therefore every 
element of $\mathfrak{S}$ is invertible and hence $\mathfrak{S}$ itself is a group. Moreover, as $H_T$ is a compact group, $T$ is semisimple and eigenvalues  of $T$ 
are of absolute value one. Hence by Theorem 1.1 of \cite{FM5}, $\mathfrak{S}$ is contained in a conjugate of unitary group $U_n(\C)$. In 
particular, $\mathfrak{S}$ is a compact group.
\end{proof}

\begin{remark}
In Proposition~\ref{b} we have considered a closed semigroup because there exists a subgroup of $GL(n,\C)$ with non-compact closure such that every element of the subgroup 
generates a relatively compact group \cite{BH}.
\end{remark}

\smallskip
\noindent{Proof of Theorem~\ref{a}:} $(a)\Rightarrow (c):$ Suppose the $\mathfrak{S}$-action on $\mbb{S}^n$ is distal. Since the action of $\mathfrak{S}^\prime$ on $\mbb{S}^n$ is 
same as that of $\mathfrak{S}$, we have that the action of $\mathfrak{S}^\prime$ on $\mbb{S}^n$ is distal. Moreover, as the closure of $\mathfrak{S}^\prime$ is also a 
semigroup whose elements have determinant $\pm1$, and it acts distally on $\mbb{S}^n$, we may assume that $\mathfrak{S}^\prime$ is closed. We first show that for 
every $T\in\mathfrak{S}^\prime$, $\{T^n\}_{n\in\N}$ is relatively compact. Let $T\in\mathfrak{S}^\prime$ be fixed. As $\det T=\pm 1$, at least one of the 
following holds: (i) all the eigenvalues of $T$ are of absolute value one, (ii) at least one eigenvalue of $T$ has absolute value less than one and at least one eigenvalue of $T$ 
has absolute value greater than one.

If possible, suppose $\{T^m\}_{m\in \N}$ is not relatively compact in $\mathfrak{S}'$. Then there exists $\{m_k\}\subset \N$ such that $\{T^{m_k}\}$ is divergent, i.e.\ it has no 
convergent subsequence. Moreover, we show that there exist a subsequence of $\{m_k\}$, which we denote by $\{m_k\}$ again, and a nonzero vector $v_0$ such that 
$\{T^{m_k}(v_0)\}$ converges.

 Suppose (i) holds. If $1$ or $-1$ is an eigenvalue of $T$, then there exists a nonzero eigenvector $v$ such that $T(v)=v$ or $T(v)=-v$. If all the eigenvalues of $T$ are 
 complex and of absolute value one, then there exists a two dimensional subspace $W^\prime$ such that $T|_{W^\prime}$ being conjugate to a rotation map, generates a 
relatively compact group in $GL(W^\prime)$. Hence, for every $v_0\in W^\prime\setminus\{0\}$, $\{T^{m_k}(v_0)\}$ is relatively compact and has a subsequence which 
converges.
 
Now suppose (ii) holds. Then $T$ has at least one eigenvalue of absolute value less than one. Therefore, the contraction group 
$C(T)$ is nontrivial, and we can choose $v_0$ as any nonzero vector in $C(T)$.

By Lemma 2.1 of \cite{DaRa3}, there exist a subspace $W$ of $\R^{n+1}$ and a subsequence $\{l_k\}$ of $\{m_k\}$ such that $\{T^{l_k}(v)\}$ converges for every 
$v\in W$ and $\|T^{l_k}(v)\|\to\infty$, whenever $v\not\in W$. Here $W\neq\{0\}$ as $v_0\in W$.

Now we show that $W=\R^{n+1}$. If possible, suppose $W\neq \R^{n+1}$. Then there exists $u\in \R^{n+1}\setminus W$ such that $\|T^{l_k}(u)\|\to\infty$. Since 
$\mbb{S}^n$ is compact, 
passing to a subsequence if necessary, we have $\ol{T}^{l_k}(u)=T^{l_k}(u)/ \| T^{l_k}(u)\|\to a$, for some $a\in \mbb{S}^n$. Let $v_0\in W$ be as above. As 
$u\notin W, u+v_0\notin W$ 
and therefore $T^{l_k}(u+v_0)\to\infty$. As $\{T^{l_k}(v_0)\}$ is bounded, we get that $\ol{T}^{l_k}(u+v_0)\to a$. Here, $\overline{u}\neq\overline{u+v_0}$ as $v_0\in W$ 
and $u\notin W$. 
This is a contradiction as $\mathfrak{S}$ acts distally on $\mbb{S}^n$. Hence $W=\R^{n+1}$, and therefore $\{T^{l_k}\}$ is bounded. As $\{l_k\}$ is a subsequence of 
$\{m_k\}$, we arrive 
at a contradiction to our earlier assumption that $\{T^{m_k}\}$ is divergent. Hence $\{T^m\}_{m\in{\N}}$ is relatively compact and all its limit points belong to 
$\mathfrak{S}^\prime$. Therefore, by Proposition~\ref{b}, $\mathfrak{S}^\prime$ is a compact group. 

$(c)\Rightarrow (b)$ as $\ol{\ms\D}/\D=\ol{\ms'}\D/\D$ which is a compact group. 
It is easy to see that $(b)\Rightarrow(a)$ as on $\mbb{S}^n$, the action of $\ms$ is same as the action of $\ms\D/\D$ whose closure is a compact group which acts distally. \qed

\begin{remark}
From Theorem~\ref{a}, it follows that for a semigroup $\mathfrak{S}\subset GL(n+1,\R)$ whose all elements have determinant $1$ or $-1$, then the 
distality of the  $\mathfrak{S}$-action on $\mbb{S}^n$ implies the distality of the $\mathfrak{S}$-action on $\R^{n+1}$. In the latter part of the proof of the theorem
instead of \cite{FM5}, one can also use the results about the structure of distal linear groups from \cite{A1} and give a different argument. 
\end{remark}

There are examples of actions of semigroups on compact spaces which are not distal but every cyclic subsemigroup acts distally, (see \cite{JR}). However, the latter does not 
happen in the case of closed semigroups of $SL(n+1,\R)$ for the action on $\mbb{S}^n$. 

\begin{corollary} \label{c}
For a closed semigroup $\mathfrak{S}$ of $SL(n+1,\R)$, the following holds: $\mathfrak{S}$ acts distally on 
$\mbb{S}^n$ if and only if every cyclic semigroup of $\mathfrak{S}$ acts distally on $\mbb{S}^n$.
\end{corollary}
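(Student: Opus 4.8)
The plan is to deduce the statement from Theorem~\ref{a} together with Proposition~\ref{b}, exploiting the fact that matrices in $SL(n+1,\R)$ have determinant $1$ so that the normalisation built into Theorem~\ref{a} disappears. One implication is immediate: if $\mathfrak{S}$ acts distally on $\mbb{S}^n$, then every subsemigroup of $\mathfrak{S}$ acts distally on $\mbb{S}^n$, in particular every cyclic subsemigroup $\{T^m\}_{m\in\N}$ with $T\in\mathfrak{S}$. So the whole content lies in the converse.

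For the converse I would argue as follows. Suppose every cyclic subsemigroup of $\mathfrak{S}$ acts distally on $\mbb{S}^n$, and fix $T\in\mathfrak{S}$. Since $\det(T^m)=1$ for all $m$, the normalising scalars in condition $(c)$ of Theorem~\ref{a} are all equal to $1$, so the semigroup $\mathfrak{S}'$ associated with the cyclic semigroup $\{T^m\}_{m\in\N}$ is $\{T^m\}_{m\in\N}$ itself. Applying the equivalence $(a)\Leftrightarrow(c)$ of Theorem~\ref{a} to $\{T^m\}_{m\in\N}$, distality of its action on $\mbb{S}^n$ yields that $\overline{\{T^m\}}_{m\in\N}$ is a compact group; in particular $\{T^m\}_{m\in\N}$ is relatively compact in $GL(n+1,\R)$.

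Next I would pass to the complexification: regard $\mathfrak{S}\subset SL(n+1,\R)\subset GL(n+1,\R)\subset GL(n+1,\C)$. Since $GL(n+1,\R)$ is closed in $GL(n+1,\C)$ and $\mathfrak{S}$ is closed in $SL(n+1,\R)$ (hence in $GL(n+1,\R)$), the semigroup $\mathfrak{S}$ is closed in $GL(n+1,\C)$; moreover, for each $T\in\mathfrak{S}$ the closure of $\{T^m\}_{m\in\N}$ taken in $GL(n+1,\C)$ coincides with its closure in $GL(n+1,\R)$, which is compact by the previous step. Thus the hypotheses of Proposition~\ref{b} are met, and it gives that $\mathfrak{S}$ is a compact group. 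Finally, because every element of $\mathfrak{S}$ has determinant $1$, the semigroup $\mathfrak{S}'$ of Theorem~\ref{a} associated to $\mathfrak{S}$ is $\mathfrak{S}$ itself, which is now a compact group; so the implication $(c)\Rightarrow(a)$ of Theorem~\ref{a} shows that $\mathfrak{S}$ acts distally on $\mbb{S}^n$, completing the proof.

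This is essentially a bookkeeping assembly of the two preceding results rather than a proof with a single hard core; the only points that genuinely require care are (i) verifying that the passage to $GL(n+1,\C)$ preserves both the closedness of $\mathfrak{S}$ and the compactness of the cyclic closures, so that Proposition~\ref{b} really applies, and (ii) noticing that the centre-quotient and the determinant normalisation in Theorem~\ref{a} both trivialise for subsemigroups of $SL(n+1,\R)$. It is also worth remarking, as the discussion preceding the statement indicates, that the closedness hypothesis on $\mathfrak{S}$ is indispensable: it is exactly what makes Proposition~\ref{b} available, and without it the conclusion fails, cf.\ \cite{JR}.
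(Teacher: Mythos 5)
Your proposal is correct and follows essentially the same route as the paper: apply Theorem~\ref{a} to each cyclic semigroup $\{T^m\}_{m\in\N}$ (noting the determinant normalisation is trivial in $SL(n+1,\R)$) to get that $\overline{\{T^m\}}_{m\in\N}$ is a compact group, then invoke Proposition~\ref{b} to conclude $\mathfrak{S}$ is a compact group and hence acts distally. The only difference is that you spell out details the paper leaves implicit, namely that $\mathfrak{S}$ remains closed and the cyclic closures remain compact when viewed inside $GL(n+1,\C)$ so that Proposition~\ref{b} genuinely applies; this is a worthwhile but routine verification.
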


\begin{proof} The``only if" statement is obvious. Now suppose every cyclic semigroup of $\mathfrak{S}$ acts distally on $\mbb{S}^n$. Let $T\in\mathfrak{S}$. 
By Theorem~\ref{a}, $\overline{\{T^m\}}_{m\in\N}$ is a compact group. Now the assertion follows from Proposition~\ref{b}.
\end{proof}


\section{Dynamics of `affine' maps on $\mbb{S}^n$}

Consider the affine action on $\R^{n+1}$, $T_a(x)=a+T(x)$, where $T\in GL(n+1,\R)$, and $a\in \R^{n+1}$. In this section, we first consider the corresponding 
`affine' map $\overline{T}_a$ on $\mbb{S}^n$ which is defined for any nonzero $a$ satisfying $\|T^{-1}(a)\|\ne 1$ as follows: $\overline{T}_a(x)=T_a(x)/\|T_a(x)\|, x\in \mbb{S}^n$. 
(For $a=0$, $\overline{T}_a=\overline{T}$, which is studied in Section 2). Observe that $T_a(x)=0$ for some $x\in \mbb{S}^n$ if and only if $T^{-1}(a)$ has norm 1. Therefore, $\ol{T}_a$ is well 
defined if $\|T^{-1}(a)\|\ne 1$. The map $\T_a$ is a homeomorphism for any nonzero $a$ satisfying 
$\|T^{-1}(a)\|<1$ (see Lemma~\ref{e}). In this section, we study the dynamics of such homeomorphsims $\overline{T}_a$.
 
Note that any nontrivial homeomorphism $S$ of $\mbb{S}^1$ with a fixed or a periodic point is not distal unless some power of $S$ is an identity map. In fact if 
$S$ has a fixed point or a periodic point of order $2$, then either $S^2=\Id$, or there exist $x,y\in\mbb{S}^1$, $x\ne y$, such that $S^{2n}(x)\to z$ and 
$S^{2n}(y)\to z$ for some fixed point $z$ of $S^2$.  This can be seen 
through an identification of $\mbb{S}^1$ to $[0,1]$ and getting an increasing homeomorphism of $[0,1]$ equivariant to $S^2$, as the latter is orientation preserving. 
These facts are well-known, we refer the reader to \cite{P} and \cite{BS} for more details. We will discuss the existence of fixed points or periodic points of order 2 for $\overline{T}_a$ on 
$\mbb{S}^1$, $T \in GL(2,\R)$, under certain conditions on the eigenvalues of $T$ and the norm of $T$. Note that for such a homeomorphism $\T_a$ on $\mbb{S}^1$, $\T_a^2$ is 
nontrivial; if it were trivial, then for every $x\in \mbb{S}^1$, either $x$ or $-x$ would belong to the positive cone generated by $a$ and $T(a)$ in $\R^2$, which would lead to a contradiction.  Therefore, 
if $\T_a$ has a fixed point or a periodic point of oder 2, then $\T_a$ is not distal. 

\begin{lemma}\label{e}
Let $T\in GL(n+1, \R)$ and let $a\in\R^{n+1}\setminus\{0\}$ be such that $\|T^{-1}(a)\|\ne 1$. The map $\overline{T}_a$ on $\mbb{S}^n$ is a homeomorphism if and only 
if $\|T^{-1}(a)\|<1$.
\end{lemma}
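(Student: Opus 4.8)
The plan is to analyze the map $\overline{T}_a$ by passing to the "conjugated" picture where $T$ becomes the identity. Precisely, define $b = T^{-1}(a)$, so $\|b\| \ne 1$, and for $x \in \mathbb{S}^n$ write $T_a(x) = a + T(x) = T(b + x)$. Since $x \mapsto b+x$ maps $\mathbb{S}^n$ onto the unit sphere $\mathbb{S}^n(b)$ centered at $b$, and $T$ is a linear isomorphism of $\R^{n+1}$, the normalization $v \mapsto v/\|v\|$ then pushes this onto $\mathbb{S}^n$ again. Thus $\overline{T}_a = \overline{T} \circ \pi \circ \tau_b|_{\mathbb{S}^n}$, where $\tau_b(x) = x+b$ and $\pi(v) = v/\|v\|$ is radial projection onto $\mathbb{S}^n$; note $0 \notin \mathbb{S}^n(b)$ exactly because $\|b\| \ne 1$, so $\pi$ is defined there. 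Since $\overline{T}$ is a homeomorphism of $\mathbb{S}^n$ (it is the restriction of the $GL(n+1,\R)$-action, with inverse $\overline{T^{-1}}$), the whole question reduces to: \emph{when is the radial projection $\pi: \mathbb{S}^n(b) \to \mathbb{S}^n$ a homeomorphism?}

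For the "if" direction, suppose $\|b\| < 1$, so the origin lies strictly inside the ball bounded by $\mathbb{S}^n(b)$. Then every ray from the origin meets $\mathbb{S}^n(b)$ in exactly one point: writing a point of the sphere as $b + x$ with $\|x\| = 1$ and asking when $b + x = t u$ for $t > 0$, $u \in \mathbb{S}^n$, one solves a quadratic in $t$ whose product of roots is $\|b\|^2 - 1 < 0$, hence exactly one positive root. So $\pi$ restricted to $\mathbb{S}^n(b)$ is a continuous bijection onto $\mathbb{S}^n$; both spaces are compact Hausdorff, so it is a homeomorphism. Composing with the homeomorphisms $\tau_b|_{\mathbb{S}^n}: \mathbb{S}^n \to \mathbb{S}^n(b)$ and $\overline{T}$ gives that $\overline{T}_a$ is a homeomorphism.

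For the "only if" direction, suppose $\|b\| > 1$, so the origin lies strictly outside the closed ball bounded by $\mathbb{S}^n(b)$. The same quadratic now has product of roots $\|b\|^2 - 1 > 0$ and sum $2\langle u, b\rangle$: for directions $u$ with $\langle u, b \rangle > 0$ close to $b/\|b\|$ the quadratic has two distinct positive roots, so two distinct points of $\mathbb{S}^n(b)$ project to the same point of $\mathbb{S}^n$, while for $u$ pointing "away" from $b$ it has no positive root and that direction is not hit at all. Hence $\pi|_{\mathbb{S}^n(b)}$ is neither injective nor surjective, so it is not a homeomorphism; since $\tau_b$ and $\overline{T}$ are homeomorphisms, $\overline{T}_a$ fails to be one as well. (Concretely, pick two points $x \ne x'$ on $\mathbb{S}^n$ with $b + x$, $b + x'$ on the same ray through $0$; then $\overline{T}_a(x) = \overline{T}_a(x')$, contradicting injectivity.) This settles the equivalence.

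The routine parts are the quadratic computation and the compact-Hausdorff upgrade from continuous bijection to homeomorphism; the one point needing a little care is making sure the failure of injectivity in the case $\|b\|>1$ is genuine — i.e. producing an honest pair of distinct directions/points rather than a tangential degeneracy — which is why I separate the regime $\langle u,b\rangle>0$, $u$ near $b/\|b\|$, where the discriminant $4\langle u,b\rangle^2 - 4(\|b\|^2-1)$ is strictly positive. I expect this case analysis of the quadratic's roots to be the main (though still elementary) obstacle; everything else is formal.
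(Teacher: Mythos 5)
Your argument is correct, and it reaches the conclusion by a genuinely different (more geometric) route than the paper. You factor $\overline{T}_a=\overline{T}\circ\pi\circ\tau_b$ with $b=T^{-1}(a)$, where $\tau_b$ carries $\mathbb{S}^n$ onto the unit sphere centered at $b$ and $\pi$ is radial projection; since $\tau_b$ and $\overline{T}$ are always homeomorphisms, everything reduces to whether $\pi$ restricted to that translated sphere is one, i.e.\ to whether the origin lies inside ($\|b\|<1$) or outside ($\|b\|>1$) it, and the root structure of the quadratic $t^2-2t\langle u,b\rangle+\|b\|^2-1=0$ settles both regimes at once. The paper instead works directly with $\overline{T}_a$: injectivity comes from the identity $(1-\beta)T^{-1}(a)=\beta y-x$ with $\beta=\|a+T(x)\|/\|a+T(y)\|$ together with the reverse triangle inequality, surjectivity from an intermediate-value argument for $\Psi(t)=\|tT^{-1}(y)-T^{-1}(a)\|$, and the converse from the explicit pair $\pm T^{-1}(a)/\|T^{-1}(a)\|$, both of which are sent to $a/\|a\|$. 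Note that your non-injectivity witness at $u=b/\|b\|$ (roots $\|b\|\pm 1$) recovers exactly that same pair, and your quadratic in $t$ is essentially the paper's $\Psi$ transported by $T^{-1}$; so the computations are cognate, but your packaging handles injectivity, surjectivity and the failure of injectivity uniformly from one picture, at the modest cost of the preliminary factorization and the check that $0\notin\mathbb{S}^n(b)$ (which is precisely $\|T^{-1}(a)\|\ne1$). Both proofs use the same final step that a continuous bijection of compact Hausdorff spaces is a homeomorphism. One small presentational remark: for the only-if direction it is cleaner to exhibit the explicit pair of distinct preimages (as you do parenthetically and as the paper does) rather than to argue via directions ``close to $b/\|b\|$''; at $u=b/\|b\|$ the discriminant is exactly $4>0$, so no perturbation is needed.
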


\begin{proof}
Suppose $\|T^{-1}(a)\|<1$. From the definition, it is clear that $\overline{T}_a$ is continuous. It is enough to show that $\overline{T}_a$ is a bijection since any continuous bijection 
on a compact Hausdorff space is a homeomorphism.
Suppose $x,y\in \mbb{S}^n$ such that $\overline{T}_a(x)=\overline{T}_a(y)$. Then we have $\left(a+T(x)\right)/\|a+T(x)\|=\left(a+T(y)\right)/\|a+T(y)\|$ or $(1-\beta)T^{-1}(a)=\beta y-x$, 
where $\beta=\|a+T(x)\|/\|a+T(y)\|$. If $\beta\neq 1$, then we get that $\|T^{-1}(a)\|\geq 1$, a contradiction. Hence $\beta=1$, 
and $x=y$. Therefore, $\overline{T}_a$ is injective.

Let $y\in \mbb{S}^n$ be fixed. Let $\Psi: \R^+\to\R^+$ be defined as follows: $\Psi(t)=\|tT^{-1}(y)-T^{-1}(a)\|$, $t\in\R^+$. Clearly, $\Psi$ is a continuous map, and hence the 
image of $\Psi$ is connected. We have $\Psi(0)=\|T^{-1}(a)\|<1$ and $\Psi(t)\to\infty$ as $t\to\infty$. Therefore there exists a $t_0\in\R^+$, $t_0\neq{0}$ such that $\Psi(t_0)=1$. Let 
$x=t_0T^{-1}(y)-T^{-1}(a)$. Then $x\in\mbb{S}^n$ and $\overline{T}_a(x)=y$. Hence $\overline{T}_a$ is surjective. 

Conversely, if $\|T^{-1}(a)\|>1$, then $\T_a(x)=\T_a(-x)=a/\|a\|$, for 
$x=T^{-1}(a)/\|T^{-1}(a)\|\in \mbb{S}^n$, i.e.\ $\overline{T}_a$ is not injective. 
\end{proof}

Observe that, $\R^2$ is isomorphic to the field $\C$ of complex numbers and $\mbb{S}^1$ is a group under multiplication. For $x\in\R^2\setminus\{0\}$, we take $x^{-1}$ as the 
inverse of $x$ in $\C$.

\begin{theorem}\label{m}
Let $T\in GL(2,\R)$ and let $a\in\R^2\setminus\{0\}$ be such that $\|T^{-1}(a)\|<1$. Then the following hold:
\begin{enumerate}
\item[$(1)$]  If an eigenvalue of $T$ is real and positive, then $\overline{T}_a$ has a fixed point. 
\item[$(2)$] If the eigenvalues of $T$ are complex of the form $r=t(\cos\theta\pm i\sin\theta)$, ($t>0$). Then $T=tABA^{-1}$ for some $A$ in $GL(2,\R)$ and $B$ is a 
rotation by the angle $\theta$. Suppose $\cos{\theta}>0$ and $|\sin\theta|\leq\|T^{-1}(a)\|/(\|A\|\|A^{-1}\|)$. Then $\overline{T}_a$ has a fixed point. 
\end{enumerate}
\end{theorem}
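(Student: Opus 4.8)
The plan is to reduce the existence of a fixed point of $\overline{T}_a$ on $\mbb{S}^1$ to a fixed-point statement for a self-map of an arc. A point $x\in\mbb{S}^1$ is fixed by $\overline{T}_a$ exactly when $a+T(x)=\lambda x$ for some $\lambda>0$, i.e.\ when $x$ is a positive eigenvector of the affine map $T_a$, equivalently a zero of the vector field $x\mapsto a+T(x)-\langle a+T(x),x\rangle x$ (the tangential component) that also has positive radial component $\langle a+T(x),x\rangle$. For part $(1)$, I would work directly with the positive eigenvalue: let $\mu>0$ be an eigenvalue with eigenvector $v$, $\|v\|=1$. Then $T(v)=\mu v$ and $T(-v)=-\mu v$, so $T_a(v)=a+\mu v$ and $T_a(-v)=a-\mu v$. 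I would look at the continuous function on the closed half-circle joining $v$ to $-v$ (on the side containing $a/\|a\|$, say) that measures the signed angle between $x$ and $\overline{T}_a(x)$; the key is that at the two endpoints $v$ and $-v$ the displacement of $\overline{T}_a$ points in opposite tangential directions along that half-circle — this is where the positivity of $\mu$ and the bound $\|T^{-1}(a)\|<1$ (which guarantees $a+T(x)\ne 0$ and controls which half-plane $T_a(\pm v)$ lands in) are used — so by the intermediate value theorem there is an interior fixed point. I expect the bookkeeping of angle functions and the verification that the displacement genuinely changes sign to be the main technical obstacle; an alternative is a degree/Brouwer argument on the disk, but on $\mbb{S}^1$ the IVT formulation is cleaner.

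For part $(2)$, the idea is to conjugate away the rotation. Write $T=tABA^{-1}$ with $B$ the rotation by $\theta$; then the affine map $T_a(x)=a+T(x)$ is conjugate (via the linear map $A$, which does not preserve $\mbb{S}^1$, so one must pass through $\R^2$) to $x\mapsto A^{-1}a + tB(x)$. Rather than conjugating the sphere, I would argue as follows: a fixed point of $\overline{T}_a$ is a solution of $a+tABA^{-1}(x)=\lambda x$, $\lambda>0$; applying $A^{-1}$ and setting $y=A^{-1}x$ this becomes $A^{-1}a + tB(y)=\lambda A^{-1}(x)=\lambda y$, so I seek $y\ne 0$ with $A^{-1}a+tB(y)=\lambda y$ for some $\lambda>0$, after which $x=A(y)/\|A(y)\|$ is the desired fixed point on $\mbb{S}^1$. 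Thus it suffices to find a positive eigenvector of the affine map $z\mapsto b+tB(z)$ where $b=A^{-1}a$. Here $tB$ has complex eigenvalues $t(\cos\theta\pm i\sin\theta)$ with $\cos\theta>0$; when $\sin\theta=0$ this is $tB=t\,\Id$ (a positive real eigenvalue) and part $(1)$ applies, so the hypothesis $|\sin\theta|$ small should be a perturbation of that case. Concretely, on the unit circle the map $u\mapsto \overline{(tB)}_b(u) = (b+tB(u))/\|b+tB(u)\|$ has, for $b=0$, the rotation $B$, which moves every point by angle $\theta$; adding $b$ perturbs this. I would show that when $|\sin\theta|$ is small enough relative to $\|b\|=\|A^{-1}a\|\ge \|T^{-1}(a)\|/(\|A\|\|A^{-1}\|)$ — this is exactly the stated bound — the perturbation coming from $b$ dominates the rotation in the tangential direction near the two points $\pm b/\|b\|$, forcing a sign change of the angular displacement and hence a fixed point by the IVT, just as in part $(1)$.

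I would carry out the argument in the order: (i) record the fixed-point equation $a+T(x)=\lambda x$, $\lambda>0$, and note $a+T(x)\ne 0$ on $\mbb{S}^1$ from $\|T^{-1}(a)\|<1$; (ii) prove $(1)$ via the signed-angle function on a half-circle through an eigendirection; (iii) reduce $(2)$ to the model map $z\mapsto A^{-1}a + tB(z)$ by the linear conjugacy above; (iv) estimate the tangential displacement of $\overline{(tB)}_b$ at $\pm b/\|b\|$, bounding the rotation contribution by $t|\sin\theta|$ and the translation contribution from below by something proportional to $\|b\|$, and invoke the hypothesis $|\sin\theta|\le \|T^{-1}(a)\|/(\|A\|\|A^{-1}\|)\le\|b\|$ to get the sign change; (v) apply the IVT and transport the fixed point back through $A$. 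The main obstacle I anticipate is step (iv): getting an honest lower bound on the translation-induced displacement that survives the worst configuration of $b$ relative to the rotation direction, and making sure the constant in the hypothesis is exactly what the estimate needs; the factor $\|A\|\|A^{-1}\|$ is the condition number of $A$ and enters precisely when passing from $\|T^{-1}(a)\|$ to $\|A^{-1}a\|$, so I would track that inequality carefully.
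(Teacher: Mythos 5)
Your plan has two genuine gaps, one in each part. In part (1), the step ``the tangential displacement has opposite signs at $v$ and $-v$, so by the IVT there is a fixed point on the arc'' does not follow. The signed angular displacement $\delta(x)$, determined by $\sin\delta(x)=\det\bigl(x,\overline{T}_a(x)\bigr)$ and $\cos\delta(x)=\langle x,\overline{T}_a(x)\rangle$, does change sign at the endpoints (indeed $\det(v,a+\mu v)=\det(v,a)=-\det(-v,a-\mu v)$), but a sign change of $\sin\delta$ only forces $\delta$ to cross $0$ \emph{or} $\pi$; a crossing through $\pi$ is an antipodal point $\overline{T}_a(x)=-x$, not a fixed point. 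Such antipodal points are not excluded by the hypotheses: they correspond to $c>0$ with $\|(c\,\Id+T)^{-1}(a)\|=1$, and one can arrange this (e.g.\ $T=AB A^{-1}$ with $B=\mathrm{diag}(t,1/t)$, $t$ large, and $A$ badly conditioned) while both eigenvalues are positive and $\|T^{-1}(a)\|<1$. So your argument needs an additional control of the radial component $\langle x,a+T(x)\rangle$ along the arc, and nothing in the sketch supplies it. The paper avoids this entirely by running the IVT in the multiplier rather than on the circle: a fixed point is produced as $x_\gamma=(\gamma\Id-T)^{-1}(a)$ with $\gamma>0$ and $\|x_\gamma\|=1$, and one finds such $\gamma$ because $\|x_\gamma\|$ equals $\|T^{-1}(a)\|<1$ at $\gamma=0$ and tends to $\infty$ as $\gamma$ approaches the positive eigenvalue (with separate bookkeeping for the degenerate cases where $A^{-1}(a)$ has a vanishing coordinate or $T$ is a Jordan block); positivity of the multiplier is then automatic.

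In part (2) the reduction itself is flawed. The fixed-point equation $a+T(x)=\lambda x$ is not homogeneous in $x$, so from a nonzero solution $y$ of $A^{-1}a+tB(y)=\lambda y$ you cannot pass to $x=A(y)/\|A(y)\|$: writing $x'=A(y)$, one has $a+T(x')=\lambda x'$, but after normalising, $a+T(x'/\|x'\|)=(1-\|x'\|^{-1})a+\lambda x'/\|x'\|$, which is not a positive multiple of $x'/\|x'\|$ unless $\|A(y)\|=1$. For the same reason, fixed points of the model map $u\mapsto (b+tB(u))/\|b+tB(u)\|$ on the unit circle in the $y$-coordinates do not transport through $A$ to fixed points of $\overline{T}_a$. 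The correct constraint is $\|A(y)\|=1$ with $y=(\gamma\Id-B)^{-1}A^{-1}(a)$, which is exactly the paper's function $g(\gamma)=\|A(\gamma\Id-B)^{-1}A^{-1}(a)\|$; the hypothesis $|\sin\theta|\le\|T^{-1}(a)\|/(\|A\|\|A^{-1}\|)$ is used not when replacing $\|T^{-1}(a)\|$ by $\|A^{-1}(a)\|$, but in the estimate $g(\cos\theta)\ge|\sin\theta|^{-1}\|T^{-1}(a)\|/(\|A\|\|A^{-1}\|)\ge 1$, after which the IVT on $]0,\cos\theta]$ gives $\gamma_2$ with $g(\gamma_2)=1$ and the fixed point $(\gamma_2\Id-T)^{-1}(a)$. (Also, your heuristic that the translation helps near $\pm b/\|b\|$ is backwards: the tangential contribution of $b$ vanishes there and is largest at points orthogonal to $b$.) In short, both parts should be reorganised around the scalar function $\gamma\mapsto\|(\gamma\Id-T)^{-1}(a)\|$, or else the antipodal issue and the normalisation issue must be addressed head-on.
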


\begin{remark} If $T$ has complex eigenvalues, then we may assume in (2) above that $\det A=\pm 1$ and $A$ is 
unique up to isometry. For if $ABA^{-1}=CBC^{-1}$ where $B$ is a rotation and $B\ne\pm \Id$, then $C^{-1}A$ commutes with 
$B$ and hence it is a rotation. Therefore, $\|A\|\|A^{-1}\|$ is uniquely defined for any such $T$. If $t^{-1}T$ is a rotation by an angle $\theta$, then we may take 
$A=\Id$ and if $\cos\theta>0$ and $|\sin\theta|\leq\|a\|<1$, then $\ol{T}_a$ has a fixed point. (It is also easy to check that the converse also holds in this case). 
\end{remark}

\noindent{Proof of Theorem~\ref{m}:}
Note that $\overline{T}_a=(\overline{\beta T})_{(\beta a)}$ for all $\beta>0$. Without loss of any generality, we can assume that $T\in GL(2,\R)$ such that 
$\det T=\pm 1$. Observe that $\T_a$ has a fixed point if there exists $\gamma>0$ such that $\gamma\Id-T$ is invertible and $x_\gamma=(\gamma\Id-T)^{-1}(a)$ has norm 1. 
However, such a $\gamma$ may not exist. Hence, we deal with some of the special cases below separately. 

\smallskip
\noindent$(1)$ Suppose $T$ has a positive real eigenvalue. There exists $A\in GL(2,\R)$ such that $T=ABA^{-1}$ where 
$B= \begin{bmatrix}
t & 0\\
0 & s
\end{bmatrix}$ or 
$\begin{bmatrix}
1 & 1\\
0 & 1
\end{bmatrix}$ for $t>0$ and $ts=\pm 1$. Let $a\in\R^2\setminus\{0\}$ be fixed such that $\|T^{-1}(a)\|<1$. Let $A^{-1}(a)=(a_1,a_2)$.

Now consider $B=\begin{bmatrix}
t & 0\\
0 & s
\end{bmatrix}$.
Note that if $a_2=0$ then $\overline{a}=a/\|a\|$ is a fixed point of $\overline{T}_a$.
If $s=1$ then $T=\Id$, and $\overline{a}$ is a fixed point of $\overline{T}_a$ for $a\in\R^2$ as above. Now let $s\neq 1$.
Suppose $a_1=0$. If $s>0$ then $\overline{a}$ is a fixed point of $\overline{T}_a$. Now suppose $s<0$. Then $ts=-1$ and $t-s>0$.  
Here, $\|T^{-1}(a)\|=\|AB^{-1}A^{-1}(a)\|=\|A(s^{-1}A^{-1}(a))\|=\|A(0,s^{-1}a_2)\|=\|ta\|<1$ (which is given). Therefore, $\|A(0,a_2/(t-s))\|=\|(t-s)^{-1}a\|=(t^2+1)^{-1}\|ta\|<1$. Moreover, as 
$A$ is an invertible linear map, $\|A(x,a_2/(t-s))\|\to\infty$ as $|x|\to\infty$. Therefore there exists a real number $x_0\ne 0$ such that 
$\|A(x_0, a_2/(t-s))\|=1$. It is easy to see that $x=A(x_0,a_2/(t-s))$ is a fixed point of $\overline{T}_a$.

Let $a_1$ and $a_2$ be nonzero. Consider $f:\R\setminus\{t,s\}\to\R^+$ defined by $f(\gamma)=\|A(a_1/(\gamma-t),a_2/(\gamma-s))\|$. As $\gamma\to 0$, 
$f(\gamma)\to \|T^{-1}(a)\|<1$, and as $\gamma\to t_0$, $f(\gamma)\to\infty$, where $t_0=\min\{t,s\}$ if $s>0$, and $t_0=t$ if $s<0$. Therefore 
there exists a $\gamma_0\in\, ]0,t_0[$ such that $\|A(a_1/(\gamma_0-t),a_2/(\gamma_0-s))\|=1$. It is easy to check that 
$A(a_1/(\gamma_0-t),a_2/(\gamma_0-s))$ is a fixed point of $\overline{T}_a$.

Let $B= \begin{bmatrix}
1 & 1\\
0 & 1
\end{bmatrix}$. If $a_2=0$, then $\ol{a}$ is a fixed point for $\ol{T}_a$. As $\|T^{-1}(a)\|<1$, arguing as above we can find a $\gamma_1\in]0,1[$ and show 
that $A(a_1/(\gamma_1-1)+a_2/(\gamma_1-1)^2,a_2/(\gamma_1-1))$ has norm one and it is a fixed point of $\overline{T}_a$. 

\smallskip
\noindent $(2)$ As $T$ has complex eigenvalues $t(\cos\theta\pm i\sin\theta)$, ($t>0$), we have that $\det T>0$. Hence we may assume that 
$\det T =1$ and $T=ABA^{-1}$, where $B$ is a rotation by the angle $\theta$. Let $r_1=\cos{\theta}>0$, $r_2=\sin{\theta}$ and $A^{-1}(a)=(a_1,a_2)$. 
Let $g:\R^+\to\R^+$ be defined as $g(\gamma)=\|A(\gamma\Id-B)^{-1}A^{-1}(a)\|$. Then 
$g(0)=\|T^{-1}(a)\|<1$. As $B$ is an isometry, we have that $\|A^{-1}(a)\|=\|A^{-1}T^{-1}(a)\|$. Here,
$g(r_1)\geq |r_2|^{-1}\|A^{-1}(a)\|/\|A^{-1}\|\geq |r_2|^{-1}\|T^{-1}(a)\|/[\|A\|\|A^{-1}\|]\geq 1$. Then there exists a $\gamma_2\in\, ]0,r_1]$ such that 
$g(\gamma_2)=1$. It is easy to check that $A(\gamma_2\Id-B)^{-1}A^{-1}(a)=(\gamma_2\Id-T)^{-1}(a)$ is a fixed point of $\ol{T}_a$. \qed

\smallskip
\begin{remark} 
Note that for $T=-\Id$, a rotation by $r=(-1,0)$ on $\R^2$, $\overline{T}_a$ on $\mbb{S}^1$ has only four periodic points of order 2; namely 
$\ol{a},-\ol{a}, x_0, a-x_0$, where $x_0$ is such that $\|x_0\|=\|a-x_0\|=1$. It can also be shown that there exists a neighbourhood $U$ of $(-1,0)$ in $\mbb{S}^1$ 
such that for every $r\in U$, and 
$T$ a rotation by $r$ in $\R^2$, $\overline{T}_a$ on $\mbb{S}^1$ has four periodic points of order 2. The proof can be given by choosing a sufficiently small neighbourhood $U$ 
such that $\overline{T}^2_a$ either contracts or expands each of the four quadrants of $\mbb{S}^1$ (defined by the line passing through 0 and $a$, and its perpendicular), 
which would imply the existence of fixed points for 
$\overline{T}_a^2$ in each quadrant.
\end{remark}

So far we have discussed the existence of fixed points or periodic points of order 2 for $\T_a$ for a set of nonzero $a$ satisfying $\|T^{-1}(a)\|<1$, by putting conditions on the 
 eigenvalues of $T$. We now want to explore conditions under which there exists at least one nonzero $a$ such that $\|T^{-1}(a)\|<1$ and  $\T_a$ has a fixed point or a 
 periodic point of order 2.

\begin{corollary} \label{propx}
Let $T\in GL(2,\R)$. Suppose $T$ has either real eigenvalues or complex eigenvalues of the form $t(\cos\theta\pm i\sin\theta)$, ($t>0$), where either $0<\cos\theta<1$ or 
$\|T\|>5\sqrt{\det T}$. Then there exists an $a\in\R^2$ such that $0<\|T^{-1}(a)\|<1$ and $\ol{T}_a$ has a fixed point or a periodic point of order 2 and it is not distal. 
\end{corollary}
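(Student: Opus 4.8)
\textbf{Proof proposal for Corollary~\ref{propx}.}

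The plan is to split into the two main eigenvalue cases and, in each, produce a single admissible vector $a$ to which Theorem~\ref{m} (or the remark on $T=-\Id$/rotations near $(-1,0)$) applies. As in the proof of Theorem~\ref{m}, I would first normalise: since $\ol{T}_a=(\ol{\beta T})_{(\beta a)}$ for $\beta>0$, we may rescale $T$ so that $\det T=\pm 1$; equivalently, the hypotheses should be read as conditions on $T/\sqrt{\det T}$ (in the complex case) or on the ratios of eigenvalues. First suppose $T$ has a real eigenvalue $\lambda$. If $\lambda>0$, Theorem~\ref{m}(1) applies directly once we pick \emph{any} nonzero $a$ with $\|T^{-1}(a)\|<1$, which certainly exists (take $a=\varepsilon b$ for small $\varepsilon>0$ and any $b\ne 0$); this gives a fixed point, hence non-distality by the discussion preceding Lemma~\ref{e}. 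If both real eigenvalues are negative, then $T=ABA^{-1}$ with $B=\mathrm{diag}(t,s)$, $t,s<0$, or $B$ a negative Jordan block; then $T^2$ has positive real eigenvalues, so by Theorem~\ref{m}(1) applied to $T^2$ there is $a$ with $\|T^{-2}(a)\|<1$ and a fixed point of $(\ol{T^2})_a=\ol{T}_a^{\,2}$ — a periodic point of order dividing $2$ for $\ol{T}_a$ — and one checks $\ol{T}_a$ itself is not distal (its square is nontrivial, as noted in the text). One must also ensure $\|T^{-1}(a)\|<1$ so that $\ol{T}_a$ is a genuine homeomorphism; here I would choose $a$ along an eigendirection and shrink it, since along an eigenvector $\|T^{-1}(a)\|=|\lambda|^{-1}\|a\|$ is controlled.

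Now suppose the eigenvalues are complex, $t(\cos\theta\pm i\sin\theta)$, and write $T=tABA^{-1}$ with $B$ the rotation by $\theta$ and, by the Remark after Theorem~\ref{m}, $\det A=\pm 1$ with $K:=\|A\|\,\|A^{-1}\|\ge 1$ canonically attached to $T$. In the sub-case $0<\cos\theta<1$: Theorem~\ref{m}(2) needs a vector $a$ with $0<\|T^{-1}(a)\|<1$ and $|\sin\theta|\le \|T^{-1}(a)\|/K$, i.e.\ $\|T^{-1}(a)\|\ge K|\sin\theta|$. Since $\cos\theta<1$ forces $|\sin\theta|>0$, the constraint $K|\sin\theta|\le \|T^{-1}(a)\|<1$ is satisfiable precisely when $K|\sin\theta|<1$. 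This is the crux, and it is exactly where the hypothesis $\cos\theta<1$ (as opposed to $\cos\theta$ close to $1$) is \emph{not by itself} enough — so I expect the real content of the corollary to be the elementary estimate $K|\sin\theta|<1$, or rather a reduction that avoids needing it. The way out is to pass to a power of $T$: replacing $T$ by $T^m$ replaces $\theta$ by $m\theta$ but keeps the \emph{same} conjugating matrix $A$ (rotations by different angles commute), hence the same $K$. Since $0<\cos\theta<1$, the angle $\theta$ is not a rational multiple giving only $\cos=0,\pm 1$ unless $\theta$ is a quarter-turn; choosing $m$ so that $m\theta$ is near a multiple of $\pi$ makes $|\sin(m\theta)|$ as small as we like (for $\theta/\pi$ irrational, by equidistribution; for $\theta/\pi$ rational with $\cos\theta\in(0,1)$, i.e.\ $\theta=\pi/3$ type angles, a direct small-power computation), so $K|\sin(m\theta)|<1$ for suitable $m$. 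Then Theorem~\ref{m}(2) applied to $T^m$ yields a fixed point of $\ol{T}_a^{\,m}$, a periodic point of $\ol{T}_a$; as before $\ol{T}_a$ is not distal. One subtlety: we need $\cos(m\theta)>0$ as well, which we arrange by taking $m\theta$ near an \emph{even} multiple of $\pi$.

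In the remaining sub-case $\cos\theta\ge 1$ is impossible (then $\sin\theta=0$), so the genuine extra case is $\cos\theta\le 0$ together with $\|T\|>5\sqrt{\det T}$; here $\|T\|/\sqrt{\det T}=t^{-1}\|T\|\cdot\! \big.$ wait — after normalising $\det T=1$ we have $t=1$ and $\|T\|=\|ABA^{-1}\|\le K$, while also $\|T\|\ge$ some lower bound in terms of $K$; the hypothesis $\|T\|>5$ forces $K$ large, which looks like the \emph{wrong} direction. So instead I read $\|T\|>5\sqrt{\det T}$ as the hypothesis that handles $\cos\theta$ near $-1$ (so $\sin\theta$ near $0$ but the power trick above is delicate near $\theta=\pi$): when $\cos\theta<0$, $T^2$ has eigenvalue-angle $2\theta$ near $0$, i.e.\ $T^2$ is close to a positive scalar; more to the point, if $\|T\|$ is large relative to $\sqrt{\det T}$ then $T$ is ``hyperbolic-like'' on a large region and one can find $a$ with $\|a\|<1<\|T(a)\|$ and reuse the quadrant argument from the Remark (the map $\ol{T}_a^{\,2}$ contracts/expands the quadrants cut out by the line through $0,a$), giving four periodic points of order $2$. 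The main obstacle, as flagged, is tying the quantitative thresholds ($|\sin\theta|\le \|T^{-1}(a)\|/K$, the constant $5$) to the qualitative statement; I would prove it by choosing $a$ in the eigendirection (real case) or using the power-of-$T$ reduction (complex, $\cos\theta\in(0,1)$) and a separate rotation-perturbation/quadrant argument anchored at $T=-\Id$ (complex, $\cos\theta\le 0$), and in all cases conclude non-distality from the preamble before Lemma~\ref{e}. \qed
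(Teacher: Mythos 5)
There is a genuine gap, and it occurs at the central mechanism you use in two of the three cases. Your reductions for the negative-real-eigenvalue case and for the complex case $0<\cos\theta<1$ both rest on the identity $\ol{(T^m)}_a=\ol{T}_a^{\,m}$ (you invoke it as ``a fixed point of $(\ol{T^2})_a=\ol{T}_a^{\,2}$'' and again for $T^m$). This identity is false: $T_a$ is affine, not homogeneous, so normalisation does not intertwine composition. Indeed $\ol{T}_a(\ol{y})=\overline{\|y\|a+T(y)}$, so $\ol{T}_a^{\,2}(x)=\overline{\|a+T(x)\|\,a+T(a)+T^2(x)}$, which is neither $\ol{(T^2)}_a(x)$ nor even $\ol{(T^2)}_{a+T(a)}(x)$; the translation part gets re-weighted by the $x$-dependent factor $\|a+T(x)\|$ at each step (already $T=\Id$, $a=(1/2,0)$, $x=(0,1)$ gives a counterexample). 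So a fixed point of $\ol{(T^m)}_b$ tells you nothing about periodic points of $\ol{T}_a$, and the power trick for $0<\cos\theta<1$ collapses; moreover, even if it worked it would produce periodic points of order $m$, not the ``fixed point or periodic point of order 2'' the statement asserts. The paper needs neither device. For two negative real eigenvalues it simply takes $a$ a short eigenvector (so $\|T^{-1}(a)\|=\|a\|/|\lambda|<1$): then $T_a(\ol a)=(\|a\|+\lambda)\ol a$ with $\|a\|+\lambda<0$, so $\ol a$ is itself a periodic point of order 2 for $\ol{T}_a$. For $0<\cos\theta<1$ with $T$ not an isometry, the hypothesis $\|A\|\|A^{-1}\|\,|\sin\theta|<1$ that you identified as ``the crux'' is never needed: one chooses $a$ with $\|T^{-1}(a)\|<1<\|T(a)\|$ (possible since $\|T^2\|>1$) and applies the intermediate value theorem to $g(\gamma)=\|(\gamma\Id-T)^{-1}(a)\|$, using that $(2\cos\theta\,\Id-B)^{-1}=B$ for the rotation $B$ (Cayley--Hamilton), so $g(2\cos\theta)=\|T(a)\|>1$ while $g(0)<1$.

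The remaining case $\cos\theta\le 0$, $\|T\|>5\sqrt{\det T}$, you explicitly leave unproved (a ``quadrant argument'' sketched only near $-\Id$), and you misread the role of the constant $5$: it has nothing to do with $\|A\|\,\|A^{-1}\|$ being large. After normalising $\det T=1$, $\|T\|>5$ lets one pick $a$ with $\|T^{-1}(a)\|<1$ and $\|a\|>5$; then $g(1)=\|(\Id-T)^{-1}(a)\|=\|a-T^{-1}(a)\|/\bigl(2(1-\cos\theta)\bigr)\ge(\|a\|-\|T^{-1}(a)\|)/4>1$, and the intermediate value theorem on $]0,1[$ gives $\gamma$ with $g(\gamma)=1$, hence a fixed point $(\gamma\Id-T)^{-1}(a)$ of $\ol{T}_a$. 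So the only case you settle as stated is the positive real eigenvalue case (which does match the paper); the negative real case is fixable by the eigenvector observation, but the two complex cases require the intermediate-value argument above rather than your power-of-$T$ reduction.
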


\begin{remark} Given any $T$ with complex eigenvalues, 
we can take $T'=CTC^{-1}$, which has the same eigenvalues as $T$ but the norm of $T'$ is very large. For $T=tABA^{-1}$,  $t^2=\det T$ as above with 
$B$ a rotation by an angle $\theta$, take $C=C(\beta)A^{-1}$, where 
$C(\beta)= \begin{bmatrix}
\beta & 0\\
0 & \beta^{-1}
\end{bmatrix}$. Then for $\beta>1$,  $T'$ has norm greater than $\beta^2|t\sin\theta|>5\sqrt{\det T}$ if $\beta>\sqrt{5}/|\sin\theta|$; (here, $\sin\theta\ne 0$).  
That is, given any $T\in GL(2,\R)$, there exist a conjugate $S$ of $T$ in $GL(2,\R)$ and a nonzero $a\in\R^2$ such that $\|S^{-1}(a)\|<1$ and $\ol{S}_a$ is not distal. 
\end{remark}

\smallskip
\noindent Proof of Corollary~\ref{propx}:
We may assume that $\det T= \pm 1$. If $T$ has at least one real positive eigenvalue then by Theorem~\ref{m}~(1), $\overline{T}_a$ has a fixed point for all $a$ satisfying  
$0<\|T^{-1}(a)\|<1$. If both 
the eigenvalues of $T$ are real and negative, then for an eigenvector $a$, $\ol{a}=a/\|a\|$ is a periodic point of order 2 for $\ol{T}_a$. 
Now suppose $T$ has complex eigenvalues $\cos\theta\pm i\sin\theta$ with $0<\cos\theta<1$. Then $\det T=1$. Suppose $T$ is an isometry. 
Since $|\sin\theta|\ne 1$, we can choose $a\in \R^2$ such that $|\sin\theta|<\|a\|<1$ and we have $\|T^{-1}(a)\|=\|a\|<1$. Now by Theorem~\ref{m}~(2), $\ol{T}_a$ has a 
fixed point. 
If $T$ is not an isometry, $T=ABA^{-1}$ and $\|T\|>1$. As $0<r_1=\cos\theta<1$, it is easy to check that $T^2$ is not an isometry and $\|T^2\|>1$. 
There exists $a\ne 0$ such that $\|T^{-1}(a)\|<1$ and $\|T(a)\|=\|T^2(T^{-1}(a))\|>1$. Now let $g:\R^+\to\R^+$ be defined as in the proof of Theorem~\ref{m}. Then 
$0<g(0)<1$ and $g(2r_1)=\|T(a)\|>1$. Therefore, there exists $\gamma_3\in\, ]0,2r_1[$, such that $g(\gamma_3)=1$ and hence 
$A(\gamma_3\Id-B)^{-1}A^{-1}(a)$ is a fixed point for $\ol{T}_a$. 
Now suppose $r_1\leq 0$. For $g$ as above, $g(1)=(1/[2(1-r_1)])\|a-T^{-1}(a)\|$. Since $\|T\|>5$, there exists $a$ such that $\|T^{-1}(a)\|<1$ and $\|a\|>5$.
Then $g(1)>(1/4)(5-1)=1$. Now there exists $\gamma_4$ such that $0<\gamma_4<1$ and $g(\gamma_4)=1$, and hence 
$A(\gamma_4\Id-B)^{-1}A^{-1}(a)=(\gamma_4\Id-T)^{-1}(a)$ is a 
fixed point for $\ol{T}_a$. 

As observed earlier, $\ol{T}_a^2$ is nontrivial. As $\T_a^2$ has a fixed point, this implies that $\T_a^2$, and hence, $\T_a$ is not distal.  
\qed

\smallskip
It is evident from Corollary~\ref{propx} that for a certain class of $T\in GL(2,\R)$ which have complex eigenvalues, we have not been able to show the existence of a nonzero 
$a$ such that $\T_a$ on $\mbb{S}^1$ has a fixed or periodic point or $\T_a$ is distal (or not), e.g.\ isometries with complex eigenvalues $\cos\theta+i\sin\theta$ for
which $\cos\theta\leq 0$, except for a small neighbourhood of $-\Id$ as mentioned in Remark~9 above. However, the following corollary shows that for any isometry $T$ in 
$GL(n+1,\R)$, $n$ even, there exists a nonzero $a$ such that $\|T^{-1}(a)\|<1$ and $\T_a$ is not distal. 
Unlike on $S^1$, there are distal homeomorphisms on higher dimensional spheres which have fixed points. 

\begin{corollary} \label{i} Let $n\in\N$ be even. Let $T$ be an isometry in $GL(n+1,\R)$. Then there exists an $a\in\R^{n+1}$ such that $0<\|T^{-1}(a)\|<1$ and $\ol{T}_{a}$ on $\mbb{S}^n$ 
is not distal. 
\end{corollary}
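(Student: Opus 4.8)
The plan is to reduce to the two-dimensional situation handled in Theorem~\ref{m} by decomposing $\R^{n+1}$ into $T$-invariant subspaces. Since $T$ is an isometry of $\R^{n+1}$ with $n+1$ odd, the real orthogonal matrix $T$ has at least one real eigenvalue $\pm1$; but more to the point, by the standard normal form for orthogonal maps, $\R^{n+1}$ decomposes as an orthogonal direct sum $V_1\oplus\cdots\oplus V_k\oplus F$, where each $V_j$ is a $2$-dimensional $T$-invariant subspace on which $T$ acts as a planar rotation by some angle $\theta_j$, and $F$ is the $(\pm1)$-eigenspace. Because $n+1$ is odd, the total dimension forces either $F\ne\{0\}$ (giving a real eigenvalue, hence by Theorem~\ref{m}~(1) or the eigenvector argument in the proof of Corollary~\ref{propx} we are already done after picking $a$ in the relevant line), or else there is at least one rotation block $V_j$. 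In either case I would isolate a single good invariant plane $V=V_j$ with rotation angle $\theta=\theta_j$.

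Next I would choose $a$ to lie entirely in that plane $V$. Since $T|_V$ is a rotation (an isometry of $V$), $\|T^{-1}(a)\| = \|a\|$, so the constraint $0<\|T^{-1}(a)\|<1$ is just $0<\|a\|<1$, which is easy to satisfy. The key observation is that the affine map $\overline{T}_a$ on $\mbb{S}^n$ leaves invariant the great circle $\mbb{S}^n\cap V = \mbb{S}^1$: indeed $T(x)\in V$ and $a\in V$ imply $T_a(x)\in V$ for $x\in V$, and normalization keeps us on $V$. So the restriction of $\overline{T}_a$ to this $\mbb{S}^1$ is exactly the two-dimensional affine map $\overline{(T|_V)}_a$ on $\mbb{S}^1$ studied in Section~3, with $T|_V$ a rotation by $\theta$ and $A=\Id$ (using the notation and the Remark following Theorem~\ref{m}). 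If $\cos\theta>0$ I apply Theorem~\ref{m}~(2) directly, choosing $\|a\|$ with $|\sin\theta|\le\|a\|<1$, which is possible since $|\sin\theta|<1$ (as $\cos\theta>0$), to get a fixed point of $\overline{T}_a$ on this circle. If $\cos\theta<0$ one can pass to $T^2$, whose restriction to $V$ is a rotation by $2\theta$; choosing the block so that $\cos 2\theta$ has the needed sign, or appealing to Remark~9 in the case $\theta$ near $\pi$, yields a periodic point of order $2$ for $\overline{T}_a$ restricted to $\mbb{S}^1\cap V$. In the borderline case $\cos\theta=0$ (rotation by $\pm\pi/2$) one checks directly, as in the proof of Theorem~\ref{m}~(1) with the negative-eigenvalue block for $T^2$, that $\overline{T}_a$ has a periodic point of order $2$ on the circle.

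Finally, a fixed point or order-$2$ periodic point of $\overline{T}_a$ lying on $\mbb{S}^1\cap V$ is a fortiori a fixed or periodic point of $\overline{T}_a$ on $\mbb{S}^n$, and as recorded in Section~3, a homeomorphism of $\mbb{S}^1$ with a fixed or order-$2$ periodic point is not distal unless its square is the identity — and here the square is nontrivial by the same positive-cone argument given in Section~3 (the map $T_a|_V$ is genuinely affine, not $\pm\Id$ on all of $V$). Non-distality of the restriction of $\overline{T}_a$ to the invariant subset $\mbb{S}^1\cap V$ immediately forces non-distality of $\overline{T}_a$ on all of $\mbb{S}^n$, since distality passes to invariant subspaces. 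The main obstacle is handling the angles with $\cos\theta\le 0$ uniformly: for those $\theta$ Theorem~\ref{m}~(2) does not apply, so I must either exploit freedom in the choice of block, pass to $T^2$ to shift the angle into the good region, or invoke the $-\Id$-neighbourhood argument of Remark~9; making sure that at least one of these always succeeds for an odd-dimensional isometry is the delicate point, but the parity constraint on $n+1$ together with the availability of a real eigenvalue when no rotation block has $\cos\theta>0$ should close this case.
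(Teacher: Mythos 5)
Your reduction to an invariant great circle works, and in fact coincides with what the paper does, in two situations: when $T$ has at least two real eigenvalues (then a $2$-dimensional invariant subspace with real eigenvalues exists and Corollary~\ref{propx} applies to the restriction), and when some rotation block has angle $\theta$ with $\cos\theta>0$ (then $a$ in that plane with $|\sin\theta|\le\|a\|<1$ and Theorem~\ref{m}~(2) give a fixed point of the restricted circle map). The genuine gap is exactly the case you flag as ``delicate'' and then wave through: a simple real eigenvalue $\pm1$ and every rotation block with $\cos\theta\le 0$. None of your three escape routes is valid. (i) ``Passing to $T^2$'' fails because $(\ol{T}_a)^2$ is \emph{not} a map of the form $\ol{(T^2)}_b$: computing, $(\ol{T}_a)^2(x)$ is proportional to $\|a+T(x)\|\,a+T(a)+T^2(x)$, and the $x$-dependent coefficient of $a$ means the normalization destroys affinity; so Theorem~\ref{m} cannot be applied to the square, and fixed points of $\ol{(T^2)}_{a+T(a)}$ say nothing about period-$2$ points of $\ol{T}_a$. (ii) Remark~9 only covers angles in a small neighbourhood of $\pi$; indeed the paper states explicitly, just before the corollary, that for planar isometries with $\cos\theta\le 0$ the existence of such an $a$ (or even a distality verdict) is open, so the circle-restriction route cannot ``close this case.'' (iii) Taking $a$ on the eigenline does give a fixed point (eigenvalue $+1$) or a period-$2$ point (eigenvalue $-1$) of $\ol{T}_a$ on $\mbb{S}^n$, but for $n\ge 2$ a fixed point does not imply non-distality (the paper points out that higher-dimensional spheres admit distal homeomorphisms with fixed points), and when the eigenvalue is simple there is no $\ol{T}_a$-invariant circle through that eigenline (the neighbouring blocks are genuine rotations), so you cannot fall back on the $\mbb{S}^1$ criterion.

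The paper's proof supplies precisely the missing idea for this case. When all eigenvalues but one are complex, it restricts to a $3$-dimensional invariant subspace, takes $a$ with $T(a)=\pm a$, $0<\|a\|<1$, and factors $T=UD=DU$ into commuting isometries, where $U$ fixes $a$ and acts as $T$ on the rotation plane $W'$, while $D$ sends $a\mapsto\pm a$ and is the identity on $W'$. Then $\ol{T}_a^m=U^m\ol{D}_a^m$. The map $\ol{D}_a$ preserves the great circle spanned by $a$ and a vector $b\in W'$ fixed by $D$, has a fixed point there by Theorem~\ref{m}~(1), hence yields a proximal pair $x\ne y$ with $\ol{D}_a^{2m}(x),\ol{D}_a^{2m}(y)\to z$; choosing $m_k$ with $U^{2m_k}\to\Id$ (the closure of the powers of an isometry is a compact group) transfers this proximal pair to $\ol{T}_a$, proving non-distality without any hypothesis on the rotation angle. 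This commuting factorization together with the recurrence of $U$ is what your proposal lacks, and without it the argument does not go through.
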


\begin{proof} Since $n$ is even, $T$ has at least one real eigenvalue.  Either $T$ has more than one real eigenvalues or all except one eigenvalues of $T$ are complex. 
In the first case, $T$ keeps a 2-dimensional subspace $V$ invariant and the restriction of $T$ to $V$ satisfies the condition in 
Corollary~\ref{propx} and hence there exists $a\in V$ for which the assertion holds.

Now suppose all except one eigenvalues of $T$ are complex. Then $T$ keeps a 3-dimensional subspace $W$ invariant such that $T|_W$ is an isometry. We may restrict 
$T$ to $W$ and assume that $n=2$. Let $a$ be such that $0<\|a\|<1$, $T(a)=\pm a$. Let $W'$ be a two dimensional subspace invariant 
under $T$ and the restriction of $T$ to $W'$ has complex eigenvalues, i.e.\ it is a nontrivial isometry.  
Let $U\in SL(3,\R)$ be such that $U(a)=a$ and $U(w)=T(w)$ for all $w\in W'$. Let $D\in GL(3,\R)$ be such that $D(a)=\pm a=T(a)$ and $D|_{W'}$ is the identity map. Then 
$U$ and $D$ both are isometries, $T=UD=DU$, $\ol{T}_a=U\ol{D}_a=\ol{D}_aU$, and hence $\T_a^m=U^m\ol{D}_a^m$ for all $m\in\N$. Let $b\in W'$ be a (nonzero) 
eigenvector of $D$ such that $D(b)=b$ and  $\langle a,b\rangle=0$. Let $V'$ be the subspace generated by $a$ and $b$. Then $D(V')=V'$, $D$ and $\ol{D}_a$ keep 
$V'\cap\mbb{S}^2$ invariant; where $V'\cap\mbb{S}^2$ is isomorphic to $\mbb{S}^1$ as we take the same norm on $V'$.  As $D|_{V'}$ has an eigenvalue 1, by 
Theorem~\ref{m}~(1), $\ol{D}_a$ has a fixed point in $V'\cap \mbb{S}^2$. As observed earlier, $\ol{D}_a^2$ restricted to $V'\cap \mbb{S}^2$ is nontrivial, and
there exist $x,y\in V'\cap \mbb{S}^2$, $x\ne y$, such that $\ol{D}_a^{2m}(x)\to z$ and $\ol{D}^{2m}(y)\to z$ as $m\to\infty$. Since $U$ is an 
isometry, there exists a sequence $\{m_k\}$ such that $U^{m_k}\to \Id$. Hence, $\ol{T}_a^{2m_k}(x)\to z$ and $\ol{T}_a^{2m_k}(y)\to z$ as $k\to\infty$.  
This implies that $\ol{T}_a$ is not diatal. 
\end{proof}

It would be interesting to extensively analyse the dynamics of `affine' maps $\T_a$ on higher dimensional spheres. 

\medskip
\noindent{\bf Acknowledgement}.
 A.\ K.\ Yadav is supported by a UGC-BSR fellowship.

{\scriptsize
{\noindent Riddhi Shah \hspace{178pt}{\noindent Alok Kumar Yadav}\\  {School of Physical Sciences}\hspace{127pt}{School of Physical Sciences}\\ {Jawaharlal Nehru University (JNU)}\hfill{Jawaharlal Nehru University (JNU)}\\ {New Delhi 110067, India}\hspace{136pt}{New Delhi 110067, India}\\ {\color{blue} rshah@jnu.ac.in}\hspace{169pt}{\color{blue} alokjnu90@gmail.com}\\ {\color{blue} riddhi.kausti@gmail.com}}

\end{document}